\newtheorem{theorem}{Theorem}[section]
\newtheorem{proposition}{Proposition}[section]
\newenvironment{proof}{\paragraph{Proof:}}{\hfill$\square$}
\title{AN INTRODUCTION TO POD-GREEDY-GALERKIN REDUCED BASIS METHOD}
\author{
 Pierfrancesco Siena \\
  MathLab, Mathematics Area, \\
  SISSA International School for Advanced Studies, \\
    Via Bonomea, 265, 34136, Trieste, Italy,\\
  \texttt{psiena@sissa.it} \\
   \And
 Michele Girfoglio \\
  MathLab, Mathematics Area, \\
  SISSA International School for Advanced Studies, \\
    Via Bonomea, 265, 34136, Trieste, Italy,\\
  \texttt{mgifogl@sissa.it} \\
  \And
 Gianluigi Rozza \\
  MathLab, Mathematics Area, \\
  SISSA International School for Advanced Studies, \\
    Via Bonomea, 265, 34136, Trieste, Italy,\\
  \texttt{grozza@sissa.it} \\
}
\begin{document}
\newcommand{\openfoam}{Open\nolinebreak\hspace{-.2em}\nolinebreak\hspace{-.2em}FOAM\textsuperscript{\textregistered}\xspace}
\maketitle

\section*{ABBREVIATIONS}
{\renewcommand\arraystretch{0.8}
\begin{longtable}{p{2cm}p{11cm}p{10cm}}
\hspace{-45pt}	\textbf{FOM}  & Full order model\\
\hspace{-45pt}	\textbf{PDE}  & Partial differential equation \\
\hspace{-45pt}	\textbf{POD}  & Proper orthogonal decomposition\\
\hspace{-45pt}	\textbf{RB}  & Reduced basis \\
\hspace{-45pt}	\textbf{ROM}  & Reduced order method\\
\end{longtable}}

\section{INTRODUCTION AND MOTIVATION}\label{intro}
PDEs can be used to model many problems in several fields of application including, e.g.,  fluid mechanics, heat and mass transfer, and electromagnetism. 
Accurate discretization methods (e.g., finite element or finite volume methods, the so-called FOM) are widely used to numerically solve these problems. However, when many physical and/or geometrical parameters are involved, the computational cost required by FOMs becomes prohibitively expensive and this is not acceptable for real-time computations that are becoming more and more popular for rapid prototyping. Therefore, there is the need to introduce ROMs (also referred to as RB methods) able to provide, as the input parameters change, fast and reliable solutions at a reduced computational cost. 



The basic idea of ROM is related to the fact that often the parametric dependence of the problem 
at hand has an intrinsic dimension much lower than the number of degrees of freedom associated to the FOM. So the core of a ROM framework consists of computing a basis to be used to reconstruct adequately the solution of the problem. Different techniques have been explored during the last decades to generate the reduced space: see, e.g., \citealp{porsching1985estimation,ito1998reduced,lassila2013generalized, bui2003proper,christensen1999evaluation,gunzburger2007reduced, patera2007reduced,rozza2008reduced}. Of course the POD and the greedy algorithm are the most used. 
Typically, in a one dimensional parameter domain, a POD procedure is preferred, whilst for multi-dimensional spaces, strategies of greedy nature are favorite. This is due to the fact that the singular value decomposition of big snapshots matrices could lead to an high computational cost. 
Further details about advantages and disadvantages of greedy and POD strategies can be found in \cite{patera2007reduced,rozza2008reduced}. 

Preliminary studies showed a limited computational improvement provided by ROM, due to the lack of a full decoupling between ROM and FOM (\citealp{noor1981recent,porsching1987reduced,porsching1985estimation}). On the other hand, more recent works achieved a complete decoupling framework by means of an \emph{offline}-\emph{online} paradigm. The \emph{offline} stage is related to the collection of a database of several high-fidelity solutions  by solving the FOM for different values of physical and/or geometrical parameters. Then all the solutions are combined and compressed to extract a set of basis functions (computed by using the POD, the greedy algorithm or other techniques) that approximates the low-dimensional manifold on which the solution lies. On the other hand, in the \emph{online} stage the information obtained in the \emph{offline} stage is used to efficiently compute the solutions for new parameters instances. 
An high level of accuracy for the reduced system is then reached with the employment of a posterior error estimation technique (\citealp{ito1998reduced1,peterson1989reduced,balmes1996parametric,prud2002reliable,huynh2007successive}). In addition, the recent introduction of the empirical interpolation method allowed to reach an efficient \emph{offline}-\emph{online} decomposition also for complex problems involving non linear models (\citealp{barrault2004empirical,grepl2007efficient}).

Thanks to the significant research work carried out in the last
years, ROMs are able to provide a very general framework that has successfully been applied to a wide range of problems from different physical contexts, e.g. Navier-Stokes equations for fluid dynamics applications within both biomedical and industrial framework including multiphysics scenarios such as turbulence, multiphase flows and fluid-structure interaction (\citealp{ito1998reduced,gunzburger2012finite,ito1998reduced1,ito2001reduced,peterson1989reduced,girfoglio2021pod,girfoglio2021pressure,girfoglio2020non,prud2002reliable,cuong2005certified,rozza2005reduced,veroy2003posteriori,manzoni2012computational,quarteroni2007numerical,pintore2021efficient,hess1812spectral,hess2017spectral,deparis2009reduced, stabile2017pod,stabile2018finite,papapicco2021neural,ballarin2016pod,nonino2019overcoming,rozza2018advances,cuong2005certified,ballarin2017numerical,ballarin2016fast,Siena2022,zainib2020reduced,hijazi2020data, lovgren2006reduced0,lovgren2006reduced1,lovgren2006reduced2}), 
heat transfer and thermo-mechanical problems (\citealp{shah2021finite,guerin2019thermomechanical,hernandez2021model,benner2019comparison,grepl2005reduced,grepl2005posteriori,rozza2009reduced1}),  
Maxwell equations for the electromagnetism (\citealp{chen2009improved,chen2010certified,chen2012certified,jabbar2004fast}) 
and some toy models for the basic representation of complex physical phenomena, for instance the Burgers equation (\citealp{veroy2003posteriori,veroy2003reduced,deparis2009reduced}). 
Moreover, although most ROM works have been developed in a finite element environment, we highlight that also other discretization methods have been investigated, e.g. finite volume 
(\citealp{girfoglio2021pod,girfoglio2021pressure,girfoglio2020non, stabile2017pod, stabile2018finite, papapicco2021neural, hijazi2020data}) 
and spectral element (\citealp{lovgren2006reduced0,lovgren2006reduced1,lovgren2006reduced2, pintore2021efficient,hess1812spectral,hess2017spectral}) methods. 
However, since the aim of this chapter is to provide a brief overview of the main ingredients of ROMs, we limit to consider projection-based ROMs for the class of elliptic coercive PDEs in a finite element setting. For a deeper discussion about ROM including applications in different contexts, we refer to \cite{benner2020model}.

This chapter is structured as follows. A brief description about affine linear elliptic coercive problems is reported in  {{Section}} \ref{formulation}. The ROM methodology is explained in  {{Section}} \ref{rb}. In particular the POD and the greedy algorithms to build the reduced space are introduced. 
{{Section}} \ref{error} provides some notes about the posterior error bounds for the ROM approach. Finally, conclusions are provided in {{Section}} \ref{concl}.

\section{PARAMETRIZED DIFFERENTIAL EQUATIONS}\label{formulation}
Let us  consider the domain $\Omega \subset \mathbf{R}^d$ where $d=1,2$ or $3$, and $\partial \Omega$ is its boundary. Both vector fields ($d_v = d$) and scalar fields ($d_v = 1$) may be considered: $w:\Omega \mapsto \mathbf{R}^{d_v}$. The portions of $\partial \Omega$ where Dirichlet boundary conditions are imposed are identified by $\Gamma^D_i$, with $1\leq i \leq d_v$. 

Let $\mathbf{V}_i(\Omega)$ be the scalar space:
\begin{equation}
    \mathbf{V}_i(\Omega)=\{ v\in H^1(\Omega) : v_{\mid \Gamma^D_i}=0 \}, \qquad  1\leq i \leq d_v.
\end{equation}
We have 
$H^1_0(\Omega) \subset \mathbf{V}_i(\Omega) \subset H^1(\Omega)$ and for $ \Gamma^D_i = \partial \Omega $, $\mathbf{V}_i (\Omega) = H^1_0(\Omega)$. From now on, we refer to $\mathbf{V}_i(\Omega)$ as $\mathbf{V}_i$, for an easier exposition. 

Let us define the space $\mathbf{V}= \mathbf{V}_1 \times \dots \times \mathbf{V}_{d_v}$, whose general element is given by $w = (w_1, \dots, w_{d_v})$. The space $\mathbf{V}$ is equipped with an inner product denoted as $(w,v)_{\mathbf{V}}, \forall w,v \in \mathbf{V}$ and the induced norm is $\lVert w \rVert_{\mathbf{V}}=(w,w)_{\mathbf{V}}^{1/2}, \forall w \in \mathbf{V}$. We observe that $\mathbf{V}$ is an Hilber space. 

Let us introduce a closed parameter domain $\mathbf{P} \subset \mathbf{R}^p$. An element of $\mathbf{P}$ is given by $\mu = (\mu_1, \dots, \mu_p)$. So we define the parametric field variable for a parameter value,  $u(\mu)=(u_1 (\mu),\dots, u_{d_v}(\mu)):\mathbf{P}\mapsto\mathbf{V}$.

\subsection{Parametric weak formulation}\label{abstract-form}
The abstract formulation of a stationary problem reads:
\begin{tcolorbox}[colback=lightgray!5!white,colframe=lightgray!75!black]
  Given $\mu \in \mathbf{P} \subset \mathbf{R}^P$, find $u(\mu)\in \mathbf{V}$ such that
  \begin{equation}
      a(u(\mu),v;\mu) = f(v;\mu) \qquad \forall v \in \mathbf{V}
      \label{varitional-problem}
  \end{equation}
  and evaluate
  \begin{equation}
      s(\mu) = l(u(\mu);\mu).
      \label{varitional-problem1}
  \end{equation}
\end{tcolorbox}
The form $a:\mathbf{V}\times\mathbf{V}\times\mathbf{P}\mapsto\mathbf{R}$ is bilinear with respect to $\mathbf{V} \times \mathbf{V}$, $f:\mathbf{V}\times\mathbf{P}\mapsto\mathbf{R}$ and $l:\mathbf{V}\times\mathbf{P}\mapsto\mathbf{R}$ are linear with respect to $\mathbf{V}$ and $s:\mathbf{P}\mapsto\mathbf{R}$ is an output of interest of the model. 

Hereinafter we assume that the problems of interest are compliant, i.e.:
\begin{itemize}
    \item $f(\cdot;\mu) = l(\cdot;\mu) \quad \forall \mu \in \mathbf{P}$,
    \item $a(\cdot,\cdot;\mu)$ is symmetric $\forall \mu \in \mathbf{P}$.
\end{itemize}
The compliant hypothesis is verified for a wide range of case studies by allowing to significantly simplify the mathematical framework. 

\subsection{Well-posedness of the problem}\label{well-pose}
The well-posedness of the abstract problem formulation \ref{varitional-problem} could be established by using the Lax-Milgram theorem (\citealp{quarteroni2008numerical}) 
based on the following two assumptions:  

\begin{enumerate}
    \item The bilinear form $a(\cdot,\cdot;\mu)$ is coercive and continuous on $\mathbf{V} \times \mathbf{V}$. \\ The coercivity requires that $\forall \mu \in \mathbf{P}$, $\exists \alpha(\mu): 0<\alpha\leq \alpha(\mu) $ such that:
    \begin{equation}
        a(v,v;\mu)  \ge \alpha(\mu) \lVert v \rVert_{\mathbf{V}}^2, \quad \forall v\in \mathbf{V}.
    \end{equation}
    The coercivity constant $\alpha(\mu)$ is defined as:
    \begin{equation}
        \alpha(\mu)=\inf_{v \in \mathbf{V}}\frac{a(v,v;\mu)}{\lVert v \rVert_{\mathbf{V}}^2}, \quad \forall \mu \in \mathbf{P}.
        \label{coercivity}
    \end{equation}
    On the other hand, the continuity requires that $\forall \mu \in \mathbf{P}$, $\exists \gamma(\mu): \gamma(\mu)\leq\gamma < \infty $  such that:
    \begin{equation}
        a(w,v;\mu)\le\gamma(\mu)\lVert w \rVert_{\mathbf{V}}\lVert v \rVert_{\mathbf{V}}, \quad \forall w,v\in \mathbf{V},
    \end{equation}
    where the continuity constant $\gamma(\mu)$ is defined as:
    \begin{equation}
        \gamma(\mu) = \sup_{v \in \mathbf{V}}\sup_{w \in \mathbf{V}}\frac{a(w,v;\mu)}{\lVert w \rVert_{\mathbf{V}} \lVert v \rVert_{\mathbf{V}}}, \quad \forall \mu \in \mathbf{P}.
        \label{continuity}
    \end{equation}
    \item The linear form $f(\cdot;\mu)$ is continuous on $\mathbf{V}$, i.e. $\forall \mu \in \mathbf{P}$, $\exists \delta(\mu): \delta(\mu)\leq\delta < \infty $ such that:
    \begin{equation}
        f(v;\mu)\le \delta(\mu)\lVert v \rVert_{\mathbf{V}}, \quad \forall v\in \mathbf{V}.
    \end{equation}
\end{enumerate}
Note that the norm of $\mathbf{V}$ can be taken equal, or equivalent, to:
\begin{equation}
    \lVert v \rVert_{\mathbf{V}}=a(w,w;\bar\mu)^{1/2},\quad \forall w \in \mathbf{V},
\end{equation}
which is the norm induced by $a(\cdot,\cdot;\bar \mu)$ for a fixed parameter $\bar \mu \in \mathbf{P}$. 

\subsection{Discretization of the weak formulation}\label{discr-weak-form}
Let us consider a discrete and finite dimensional subset $\mathbf{V}_{\delta} \subset \mathbf{V}$ with $dim(\mathbf{V}_{\delta}) = N_\delta$. As an example, it can be built as a standard finite element method with a triangulation and defining suitable linear basis functions on each element (\citealp{bathe2007finite}). 

The discretized form of { Eqs.} \ref{varitional-problem} - \ref{varitional-problem1} is:
\begin{tcolorbox}[colback=lightgray!5!white,colframe=lightgray!75!black]
  Given $\mu \in \mathbf{P}$, find $u_{\delta}(\mu)\in \mathbf{V}_{\delta}$ such that
  \begin{equation}
      a(u_{\delta}(\mu),v_{\delta};\mu) = f(v_{\delta};\mu) \qquad \forall v_{\delta} \in \mathbf{V}_{\delta}
      \label{varitional-problem-discr}
  \end{equation}
  and evaluate
  \begin{equation}
      s_{\delta}(\mu) = l(u_{\delta}(\mu);\mu).
      \label{varitional-problem1-discr}
  \end{equation}
\end{tcolorbox}
{Eq.} \ref{varitional-problem-discr} is referred to as the high fidelity problem or the truth problem or the FOM. Commonly, high fidelity solutions are computed only when the number of parameters is small due to the usual high number of degrees of freedom $N_{\delta}$ despite the fact that the solution could be achieved with as high accuracy as desired. 

Thanks to the continuity and the coercivity of $a(\cdot,\cdot;\mu)$, the Galerkin orthogonality holds:
\begin{equation}
    a(u(\mu)-u_{\delta}(\mu),v_{\delta};\mu) = 0, \quad \forall v_{\delta} \in \mathbf{V}_{\delta}.
    \label{galerkin-ort}
\end{equation}
In addition, based on the triangle inequality, we obtain:
\begin{equation}
     \lVert u(\mu)-u_{\delta}(\mu) \rVert_{\mathbf{V}} \le  \lVert u(\mu)-v_{\delta} \rVert_{\mathbf{V}} + \lVert v_{\delta}-u_{\delta}(\mu) \rVert_{\mathbf{V}}, \quad \forall v_{\delta} \in \mathbf{V}_{\delta}.
     \label{triangle-in}
\end{equation}
Moreover, it holds that
\begin{equation}
\begin{split}
   \alpha(\mu)\lVert v_{\delta}- u_{\delta}(\mu) \rVert_{\mathbf{V}}^2 & \le  a( v_{\delta}- u_{\delta}(\mu), v_{\delta}- u_{\delta}(\mu);\mu) = a( v_{\delta}- u(\mu), v_{\delta}- u_{\delta}(\mu);\mu)
   \\
   & \le \gamma(\mu) \lVert v_{\delta}-u(\mu) \rVert_{\mathbf{V}} \lVert v_{\delta}-u_{\delta}(\mu) \rVert_{\mathbf{V}}, \quad \forall v_{\delta} \in \mathbf{V}_{\delta},
\end{split}
\end{equation}
and finally by applying the coercivity and continuity assumptions, and the Galerkin orthogonality, the Cea's lemma \citealp{monk2003finite} is recovered: 
\begin{equation}
    \lVert u(\mu)- u_{\delta}(\mu) \rVert_{\mathbf{V}} \le \bigg( 1 + \frac{\gamma(\mu)}{\alpha(\mu)} \bigg) \inf_{v_{\delta}\in \mathbf{V}_{\delta}} \lVert u(\mu)-v_{\delta} \rVert_{\mathbf{V}}, \quad \forall v_{\delta} \in \mathbf{V}_{\delta}.
\end{equation}
Thus the approximation error $\lVert u(\mu)- u_{\delta}(\mu) \rVert_{\mathbf{V}}$ is closely related to the best approximation error of $u(\mu)$ in the approximation space $\mathbf{V}_{\delta}$ through the constants $\alpha(\mu)$ and $\gamma(\mu)$.

Concerning the implementation of the truth solver, the stiffness matrix $A^{\mu}_{\delta}$ and the vector $f^{\mu}_{\delta}$ can be assembled as:
\begin{equation}
    (A^{\mu}_{\delta})_{i,j} =  a(\phi_j,\phi_i;\mu) \qquad \text{and} \qquad (f^{\mu}_{\delta})_i = f(\phi_i;\mu), \qquad 1 \le i,j \le N_{\delta},
\end{equation}
where $\{\phi_i \}_{i=1}^{N_{\delta}}$ is a basis of $\mathbf{V}_{\delta}$.
Therefore, the FOM problem in matrix form is: 
\begin{tcolorbox}[colback=lightgray!5!white,colframe=lightgray!75!black]
Given $\mu \in \mathbf{P}$, find $u^{\mu}_{\delta}\in \mathbf{R}^{N_{\delta}}$ such that:
\begin{equation}
A^{\mu}_{\delta}u^{\mu}_{\delta}=f^{\mu}_{\delta}.
\end{equation}
Then evaluate
\begin{equation}
    s_{\delta}(\mu) = {(u^{\mu}_{\delta})}^T f^{\mu}_{\delta}.
\end{equation}
The field approximation $u_{\delta}(\mu)$ is obtained by 
 $u_{\delta}(\mu)=\sum_{i=1}^{N_{\delta}}(u^{\mu}_{\delta})_i \phi_i$ where $(u^{\mu}_{\delta})_i$ denotes the $i$-th coefficient of the vector $u^{\mu}_{\delta}$.
\end{tcolorbox}
\section{REDUCED ORDER MODEL}\label{rb}
As disclosed in {Sec. }\ref{intro}, ROM is developed to deal with repeated model evaluation over a wide range of parameters values by cutting down the large computational cost due to solving the FOM many times. In particular, the aim is to extract from a collection of solutions of the parameterized
problem related to determined values of the parameters (named solution manifold)  
a small number of basis functions to be used in order to obtain an accurate reconstruction of the solution for a new instance of the parameters at a reduced computational cost. Such a goal is conduced by introducing an \emph{offline}-\emph{online} paradigm. The \emph{offline} stage requires the resolution of $N$ truth problems, therefore its computational cost is elevated due to the high value of $N_{\delta}$. Then a reduced basis of size $N$ is identified. On the other hand, during the \emph{online} stage, a Galerkin projection onto the space spanned by the reduced basis is carried out, enabling to investigate the parameter space at a considerably reduced cost. 

\subsection{The reduced approximation}
Let us define the solution manifold of {Eq.} \ref{varitional-problem} the set of all the exact solutions $u(\mu)\in\mathbf{V}$ varying the parameter $\mu$:
\begin{equation}
    \mathcal{M} = \{u(\mu) : \mu \in \mathbf{P} \} \subset \mathbf{V}.
\end{equation}
At a discrete level, we can define similarly the solution manifold of {Eq.} \ref{varitional-problem-discr}:
\begin{equation}
    \mathcal{M}_{\delta}=\{ u_{\delta}(\mu) : \mu \in \mathbf{P}\} \subset \mathbf{V}_\delta.
\end{equation}
Of course, the cost to find the truth solutions can be very large because it depends on $N_{\delta}$. 
ROM aims to find a small number of basis functions whose linear combination represents accurately the numerical solution $u_{\delta}(\mu)$. Let $\{ \xi \}_{i=1}^{N}$ be the $N$-dimensional set of the reduced basis (the main techniques to compute it are explained in {Section \ref{rb-gen}}). Then the space generated is:
\begin{equation}
    \mathbf{V}_{rb}=span\{ \xi_1,\dots,\xi_N \} \subset \mathbf{V}_{\delta},
\end{equation}
where it is assumed $N \ll N_{\delta}$. 
Therefore, the reduced form of the {Eqs.} \ref{varitional-problem-discr}-\ref{varitional-problem1-discr} is:
\begin{tcolorbox}[colback=lightgray!5!white,colframe=lightgray!75!black]
  Given $\mu \in \mathbf{P}$, find $u_{rb}(\mu)\in \mathbf{V}_{rb}$ such that
  \begin{equation}
      a(u_{rb}(\mu),v_{rb};\mu) = f(v_{rb};\mu) \qquad \forall v_{rb} \in \mathbf{V}_{rb}
      \label{varitional-problem-rb}
  \end{equation}
  and evaluate
  \begin{equation}
      s_{rb}(\mu) = f(u_{rb}(\mu);\mu).
      \label{varitional-problem1-rb}
  \end{equation}
  The reduced solution can be expressed as $    u_{rb}(\mu)=\sum_{i=1}^{N}(u_{rb}^{\mu})_i \xi_i$ where $ (u_{rb}^{\mu})_i$ are the coefficients of the reduced basis approximation.
\end{tcolorbox}

An essential analysis concerns the accuracy of ROM. By using the triangle inequality, it is possible to estimate the ROM error by controlling the distance between the truth solution and the reduced solution:
\begin{equation}
    \lVert u(\mu)- u_{rb}(\mu) \rVert_{\mathbf{V}} \le \lVert  u(\mu) - u_{\delta}(\mu) \rVert_{\mathbf{V}} + \lVert u_{\delta}(\mu) - u_{rb}(\mu)  \rVert_{\mathbf{V}},
\end{equation}
under the assumption  that there exists a numerical method which allows to have $\lVert  u(\mu) - u_{\delta}(\mu) \rVert_{\mathbf{V}}$ small enough (see {Section} \ref{discr-weak-form}). \\ Let us introduce the Kolmogorov $N$-width which measures the distance between $\mathcal{M}_{\delta}$ and $\mathbf{V}_{rb}$:
\begin{equation}
    d_N(\mathcal{M}_{\delta}) = \inf_{\mathbf{V}_{rb}} \mathcal{E}(\mathcal{M}_{\delta},\mathbf{V}_{rb}) = \inf_{\mathbf{V}_{rb}} \sup_{u_{\delta}\in\mathcal{M}_{\delta}}\inf_{v_{rb}\in\mathbf{V}_{rb}} \lVert u_{\delta}-v_{rb} \rVert_{\mathbf{V}}.
\end{equation}
We observe that when $d_N(\mathcal{M}_{\delta})$ decreases quickly as $N$ increases, a low number of basis functions could approximate properly the solution manifold $\mathcal{M}_{\delta}$. \\ In addition, the Cea's lemma applied to $\mathbf{V}_{rb}$ states:
\begin{equation}
    \lVert u(\mu)- u_{rb}(\mu) \rVert_{\mathbf{V}} \le \bigg( 1 + \frac{\gamma(\mu)}{\alpha(\mu)} \bigg) \inf_{v_{rb}\in \mathbf{V}_{rb}} \lVert u(\mu)-v_{rb} \rVert_{\mathbf{V}}, \quad \forall v_{rb} \in \mathbf{V}_{rb},
    \label{cea}
\end{equation}
where the coercivity and the continuity constants occur. Thus the ROM accuracy depends on the problem at hand. 
\\ The reduced solution matrix $A^{\mu}_{rb}$ and the vector $f^{\mu}_{rb}$ are:
\begin{equation}
    (A^{\mu}_{rb})_{m,n} =  a(\xi_n,\xi_m;\mu) \qquad \text{and} \qquad (f^{\mu}_{rb})_m = f(\xi_m;\mu), \qquad 1 \le m,n \le N.
\end{equation}
So the ROM in matrix form is: 
\begin{tcolorbox}[colback=lightgray!5!white,colframe=lightgray!75!black]
Given $\mu \in \mathbf{P}$, find $u^{\mu}_{rb}\in \mathbf{R}^{N}$ such that:
\begin{equation}
A^{\mu}_{rb}u^{\mu}_{rb}=f^{\mu}_{rb}
\end{equation}
and evaluate
\begin{equation}
    s_{rb}(\mu) = {(u^{\mu}_{rb})}^T f^{\mu}_{rb}.
\end{equation}

\end{tcolorbox}

\subsection{Reduced basis generation}\label{rb-gen}
In this section we analyse the two main strategies to generate the reduced basis space: the POD and the greedy algorithm. We highlight that, in literature, many other techniques can be found, also involving machine learning approaches: see, e.g., \cite{fu2021data,lee2020model,gonzalez2018deep,kashima2016nonlinear}. However the description of these other methods is out of the scope of this chapter. \\ Let us introduce a discrete and finite-dimensional set of parameters $\mathbf{P}_{h}$. It can be obtained from an equispaced or a random sampling of $\mathbf{P}$. Therefore, the following solution manifold of dimension $M=|\mathbf{P}_h|$ can be considered:
\begin{equation}
    \mathcal{M}_{\delta}(\mathbf{P}_h)=\{ u_{\delta}(\mu) : \mu \in \mathbf{P}_h  \}.
\end{equation}
By definition, it holds:
\begin{equation}
    \mathcal{M}_{\delta}(\mathbf{P}_h)\subset\mathcal{M}_{\delta} \qquad \text{and} \qquad \mathbf{P}_h \subset \mathbf{P}.
\end{equation}
Consequently, if $\mathbf{P}_h$ is sufficiently rich, $\mathcal{M}_{\delta}(\mathbf{P}_h)$ is able to represent properly $\mathcal{M}_{\delta}$.
\subsubsection{Proper orthogonal decomposition}
POD is a technique whose aim consists of squeezing data. 
Once the parameter space is discretized and the high fidelity solutions for each element of $\mathbf{P}_h$ are computed, POD extracts and retains only the essential information with a resulting compact form of the problem. \\ The POD-space of dimension $N$ is the solution of the following minimization problem:
\begin{equation}
    \min_{\mathbf{V}_{rb}:|\mathbf{V}_{rb}|=N} \Bigg( \int_{\mu \in \mathbf{P}} \inf_{v_{rb}\in \mathbf{V}_{rb}} \lVert u_{\delta}(\mu)-v_{rb} \rVert_{\mathbf{V}}^2 d\mu \Bigg)^{1/2},
\end{equation}
whose discrete version corresponds to:
\begin{equation}
     \min_{\mathbf{V}_{rb}:|\mathbf{V}_{rb}|=N} \Bigg( \frac{1}{M}\sum_{\mu \in \mathbf{P}_h} \inf_{v_{rb}\in \mathbf{V}_{rb}} \lVert u_{\delta}(\mu)-v_{rb} \rVert_{\mathbf{V}}^2  \Bigg)^{1/2}.
     \label{opt-crit}
\end{equation}
Let us sort the elements of $\mathbf{P}_h$
\begin{equation}
    \{\mu_1,\dots,\mu_M\},
\end{equation}
and denote with
\begin{equation}
    \{ \psi_1, \dots, \psi_M \}
\end{equation}
the elements of $\mathcal{M}_{\delta}(\mathbf{P}_h)$, $\psi_m=u_{\delta}(\mu_m)$ for $m=1,\dots,M$ (the so-called snapshots). \\ If $\mathbf{V}_{\mathcal{M}}=span\{ u_{\delta}(\mu) : \mu \in \mathbf{P}_h \}$, we can introduce the symmetric and linear operator $\mathcal{C}:\mathbf{V}_{\mathcal{M}}\mapsto\mathbf{V}_{\mathcal{M}}$ defined as:
\begin{equation}
    \mathcal{C}(v_{\delta})=\frac{1}{M}\sum_{m=1}^M (v_{\delta},\psi_m)_{\mathbf{V}} \psi_m \quad v_{\delta} \in \mathbf{V}_{\mathcal{M}},
    \label{linear-op}
\end{equation}
whose eigenvalues and normalized eigenvectors are denoted as $(\lambda_i,\xi_i)\in \mathbf{R}\times \mathbf{\mathbf{V}_{\mathcal{M}}}$ and satisfy:
\begin{equation}
    (\mathcal{C}(\xi_i),\psi_m)_{\mathbf{V}}=\lambda_i(\xi_i,\psi_m)_{\mathbf{V}}, \quad 1 \le m \le M.
    \label{eig-p}
\end{equation}
Here we assume that the eigenvalues are sorted in descending order, $\lambda_1 \ge \lambda_2 \ge \dots \ge \lambda_M$. The basis functions for the space $\mathbf{V}_{\mathcal{M}}$  are given by the eigenfunctions  $\{\xi_1,\dots,\xi_M\}$. \\ The first $N \ll M$ eigenfunctions $\{\xi_1,\dots,\xi_N\}$ generate the $N$-dimensional reduced space $\mathbf{V}_{POD} = span \{\xi_1,\dots,\xi_N \}$ which fulfills {Eq.} \ref{opt-crit}. 
By projecting the elements of $\mathcal{M}_{\delta}(\mathbf{P}_h)$ onto $\mathbf{V}_{POD}$, it can be proven that the error is related to the neglected eigenvalues:
\begin{equation}
    \frac{1}{M}\sum_{m=1}^M\lVert \psi_m - P_N[\psi_m] \rVert_{\mathbf{V}}^2 = \sum_{m=N+1}^M \lambda_m,
\end{equation}
where $P_N[\psi_m] = \sum_{i=1}^N(\psi_m,\xi_i)_{\mathbf{V}}\xi_i$ is the projection of $\psi_m$ onto $\mathbf{V}_{POD}$.
Due to the orthonormality of the eigenvectors in $\lVert \cdot \rVert_{\ell^2(\mathbf{R}^M)}$, we can derive the orthogonality property for the eigenfunctions:
\begin{equation}
    (\xi_m,\xi_q)_{\mathbf{V}}=M\lambda_i\delta_{mq}, \quad 1\le m,q \le M,
\end{equation}
where $\delta_{mq}$ is the Kronecker delta.

In matrix form, the correlation matrix $C\in\mathbf{R}^{M\times M}$ corresponding to the linear operator \ref{linear-op} is:
\begin{equation}
    C_{mq}=\frac{1}{M}(\psi_m,\psi_q)_{\mathbf{V}}, \quad 1 \le m,q \le M.
\end{equation}
The eigenvalues problem equivalent to {Eq.} \ref{eig-p} is:
\begin{equation}
    C v_i = \lambda_i v_i, \quad 1 \le i \le N.
\end{equation}
Then, the orthogonal basis functions are given by:
\begin{equation}
    \xi_i = \frac{1}{\sqrt{M}}\sum_{m=1}^M (v_i)_m \psi_m, \quad 1 \le i \le N,
\end{equation}
where$(v_i)_m$ denotes the $m$-th coefficient of the eigenvector $v_i \in \mathbf{R}^M$ .

The computational cost to perform the POD can be very high. In fact, it is not possible to know the number of high fidelity solutions needed to ensure a good solution a priori and it depends on the problem at hand. Therefore, a lot of FOMs, $M\gg N$, have often to be solved, leading to an expensive offline phase. In addition, when $M$ and $N_{\delta}$ are large, also the cost to extrapolate the eigenfunctions, scaling as $\mathcal{O}(NN_{\delta}^2)$, increases. \\



\subsubsection{Greedy algorithm}
The greedy algorithm, schematized in {Algorithm} \ref{alg:the_alg}, is an iterative strategy where at each iteration
one new basis function is added and the overall precision of the basis set is improved. It requires
one truth solution to be computed per iteration and a total of $N$ truth solutions to generate the
$N$-dimensional reduced basis space. \\ Let us suppose that an estimation $\eta(\mu)$ of the error due to the replacement of $\mathbf{V}_{\delta}$ with $\mathbf{V}_{rb}$ is available (see {Section} \ref{error} for further details), i.e.
\begin{equation}
    \lVert u_{\delta}(\mu)-u_{rb}(\mu) \rVert_{\mu} \le \eta(\mu), \quad \forall \mu \in \mathbf{P}.
\end{equation}
At the $n$-th step of the iterative process, a new parameter is selected:
\begin{equation}
    \mu_{n+1}=\underset{\mu \in \mathbf{P}}{\operatorname{argmax}} \big[  \eta(\mu) \big],
\end{equation}
and the corresponding full order solution $u_{\delta}(\mu_{n+1})$ is computed. Then it is included in the reduced basis space $\mathbf{V}_{rb}=span \{u_{\delta}(\mu_1),\dots,u_{\delta}(\mu_{n+1}) \} $. In other words, at each iteration, if the dimension of $\mathbf{V}_{rb}$ is $n$, the $n+1$ basis function maximizes the estimation of the model order reduction error over $\mathbf{P}$. This is repeated until the maximal estimated error is under a fixed tolerance. So while the POD procedure find a basis which is optimal for the $L^2$-norm, the greedy approach is based on the maximum norm over $\mathbf{P}$. \\From an operational point of view, as for the POD approach, we introduce the discrete parameter space $\mathbf{P}_h$. Since the greedy approach needs only the evaluation of the error estimator and not the resolution of {Eq.} \ref{varitional-problem-discr} for each point in $\mathbf{P}_h$, 
the parameter space can be denser
than the one used with the POD procedure, simply because the computational cost is considerably smaller. So the main advantages of this approach are two: the resolution of a big eigenvalues problem is avoided and only $N$ truth
solutions are computed. 
\\ As a general rule, 
if $F=\{ f(\mu):\Omega \mapsto \mathbf{R} : \mu \in \mathbf{P} \}$ is a set of parametrized functions, the $n+1$-th basis function is chosen as
\begin{equation}
    f_{n+1} = \underset{\mu \in \mathbf{P}}{\operatorname{argmax}}  \lVert f(\mu)- P_n f(\mu) \rVert_{\mathbf{V}} ,
\end{equation}
where $P_nf$ is the orthogonal projection onto $F_n=span\{f_1,\dots,f_n \}$. Some results about the convergence of the greedy algorithm are shown in the following (\citealp{binev2011convergence,devore2013greedy}). 
\begin{theorem}
Let us assume that $F$ has an exponentially small Kolmogorov $N$-width, $d_N(F) \le ce^{-aN}$, with $a>log(2)$. Then, $\exists \beta >0 $ such that the set $F_N=span\{f_1,\dots,f_N \}$ resulting from the greedy algorithm is exponentially accurate in the sense that: $$ \lVert f - P_Nf \rVert_{\mathbf{V}} \le Ce^{-\beta N}$$
\end{theorem}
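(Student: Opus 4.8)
The plan is to stop tracking the individual functions and instead follow the single scalar sequence of \emph{greedy errors} $\sigma_n := \sup_{f \in F}\lVert f - P_n f\rVert_{\mathbf{V}}$, where $P_n$ is the orthogonal projection onto $F_n = \mathrm{span}\{f_1,\dots,f_n\}$. Since $\lVert f - P_N f\rVert_{\mathbf{V}} \le \sigma_N$ for every $f \in F$ by definition of the supremum, it suffices to bound $\sigma_N$. Moreover $F_n \subset F_{n+1}$ forces $\sigma_{n+1} \le \sigma_n$, so the sequence is non-increasing. The entire argument then reduces to a \emph{comparison inequality} between the greedy errors and the Kolmogorov widths, of the form $\sigma_N \le 2^{N+1} d_N(F)$; once this is in hand the hypothesis closes the proof immediately.

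To prove such a comparison I would first set up the linear algebra that encodes the near-optimality of the greedy selection. Applying Gram--Schmidt to $f_1,\dots,f_N$ produces an orthonormal basis $e_1,\dots,e_N$ of $F_N$, and I collect the coordinates $a_{k,j} = (f_k, e_j)_{\mathbf{V}}$ into a lower-triangular matrix, with $a_{k,j}=0$ for $j>k$. Two facts are the crux. First, the diagonal entries are exactly the greedy errors, $a_{k,k} = \lVert f_k - P_{k-1} f_k\rVert_{\mathbf{V}} = \sigma_{k-1}$, because $f_k$ is by construction the maximizer of $\lVert \cdot - P_{k-1}(\cdot)\rVert_{\mathbf{V}}$ over $F$. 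Second, the off-diagonal entries are controlled by earlier errors: since $e_j \perp F_{j-1}$ we may write $a_{k,j} = (f_k - P_{j-1}f_k, e_j)_{\mathbf{V}}$, and Cauchy--Schwarz together with the definition of $\sigma_{j-1}$ give $\lvert a_{k,j}\rvert \le \lVert f_k - P_{j-1}f_k\rVert_{\mathbf{V}} \le \sigma_{j-1}$ for all $j \le k$.

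Next I would bring in the Kolmogorov width. By definition of $d_m := d_m(F)$ there is an $m$-dimensional subspace within distance $d_m$ of every element of $F$, so in particular the greedy functions nearly lie in a low-dimensional space. Comparing the volume $\lvert\det(a_{k,j})\rvert = \prod_{k=1}^{N}\sigma_{k-1}$ spanned by the $f_k$ with the fact that this volume must collapse when the vectors almost lie in such a subspace, and feeding this into an inductive doubling argument over dyadic scales (the content of \citealp{binev2011convergence,devore2013greedy}), yields the comparison $\sigma_N \le 2^{N+1} d_N$. This step is the genuine obstacle: the triangular bounds above are elementary, but propagating single-scale width information through all $N$ greedy steps is exactly what produces the unavoidable geometric factor $2^N$, and hence the threshold $a > \log 2$.

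Finally I would substitute the hypothesis $d_N \le c\,e^{-aN}$ to obtain $\sigma_N \le 2^{N+1} c\, e^{-aN} = 2c\,(2e^{-a})^{N} = 2c\, e^{-(a-\log 2)N}$. Because $a > \log 2$ the exponent is negative, so setting $\beta := a - \log 2 > 0$ and $C := 2c$ gives $\lVert f - P_N f\rVert_{\mathbf{V}} \le \sigma_N \le C e^{-\beta N}$, which is the claim.
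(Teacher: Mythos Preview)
The paper does not actually prove this theorem: it is stated without proof and attributed to \citealp{binev2011convergence,devore2013greedy}. Your proposal therefore goes further than the paper itself, and what you have written is a faithful outline of the argument in those references. The reduction to the scalar sequence $\sigma_n$, the lower-triangular Gram--Schmidt matrix with diagonal entries $\sigma_{k-1}$ and off-diagonal bounds $|a_{k,j}|\le\sigma_{j-1}$, and the final substitution leading to $\beta=a-\log 2$ are all correct. The only substantive gap you acknowledge yourself: the comparison $\sigma_N\le 2^{N+1}d_N(F)$ is asserted rather than proved, with the determinant/volume heuristic gestured at but the actual induction deferred to the cited papers. Since the paper does the same (indeed less), this is not a defect relative to the text, but if you were writing a self-contained proof you would need to carry out that step explicitly. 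One small caveat worth noting is that $a_{k,k}=\sigma_{k-1}$ presumes the supremum defining $\sigma_{k-1}$ is attained, which requires a compactness assumption on $F$ (or else one works with the weak greedy variant and picks up an extra constant).
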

Hence, if a problem allows an efficient and compact reduced basis, the greedy algorithm will provide an exponentially convergent approximation to it.
\begin{theorem}
Let us assume that all the solutions $\mathcal{M}$ have an exponentially small Kolmogorov $N$-width, $d_N(\mathcal{M}) \le ce^{-aN}$, with $a>log\big( 1+ \sqrt{\frac{\gamma}{\alpha}} \big)$ (where $\gamma$ and $\alpha$ are the continuity and coercivity constants introduced in {Section} \ref{well-pose}). Then, the reduced basis approximation converges exponentially fast in the sense that $\exists \beta >0 $ such that:
$$\forall \mu \in \mathbf{P} : \lVert u_{\delta}(\mu)-u_{rb}(\mu) \rVert_{\mathbf{V}}\le Ce^{-\beta N}$$
\end{theorem}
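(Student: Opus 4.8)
The plan is to control the single quantity $\sigma_N := \sup_{\mu\in\mathbf{P}}\lVert u_\delta(\mu)-u_{rb}(\mu)\rVert_{\mathbf{V}}$ and then read off the claim, since the asserted pointwise bound is exactly $\lVert u_\delta(\mu)-u_{rb}(\mu)\rVert_{\mathbf{V}}\le\sigma_N$ for every $\mu$. By construction of the algorithm the maximum defining $\sigma_N$ is attained (through the estimator $\eta$) at the parameter $\mu_{N+1}=\operatorname{argmax}_{\mu\in\mathbf{P}}\eta(\mu)$ selected at step $N$, so $\sigma_N$ is precisely the greedy error after $N$ snapshots. The first conceptual move is therefore to recast the reduced-basis greedy, which enriches $\mathbf{V}_{rb}$ with the Galerkin solution $u_\delta(\mu_{N+1})$, as a greedy procedure for the best-approximation (orthogonal-projection) error, so that the abstract convergence machinery of the preceding theorem applies.

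The key observation is that the Galerkin reduced solution is \emph{not} the $\mathbf{V}$-orthogonal projection $P_N u_\delta(\mu)$, but under the compliant (symmetric, coercive) hypotheses it \emph{is} the orthogonal projection with respect to the energy inner product $a(\cdot,\cdot;\mu)$, hence the best approximation in the energy norm $\lVert\cdot\rVert_\mu$. Combining this with the norm equivalence $\sqrt{\alpha}\lVert v\rVert_{\mathbf{V}}\le\lVert v\rVert_\mu\le\sqrt{\gamma}\lVert v\rVert_{\mathbf{V}}$ gives the sharp (symmetric) Cea sandwich $\lVert u_\delta(\mu)-P_N u_\delta(\mu)\rVert_{\mathbf{V}}\le\lVert u_\delta(\mu)-u_{rb}(\mu)\rVert_{\mathbf{V}}\le\sqrt{\gamma/\alpha}\,\lVert u_\delta(\mu)-P_N u_\delta(\mu)\rVert_{\mathbf{V}}$. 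Because the selected $\mu_{N+1}$ maximizes the Galerkin error, while that error dominates the orthogonal-projection error pointwise, this sandwich yields $\lVert u_\delta(\mu_{N+1})-P_N u_\delta(\mu_{N+1})\rVert_{\mathbf{V}}\ge\sqrt{\alpha/\gamma}\,\sup_{\mu}\lVert u_\delta(\mu)-P_N u_\delta(\mu)\rVert_{\mathbf{V}}$; that is, the reduced-basis greedy is a \emph{weak} greedy for the orthogonal-projection error with weakness constant $\gamma_w=\sqrt{\alpha/\gamma}$. If the estimator $\eta$ is used in place of the true error, one simply multiplies $\gamma_w$ by its effectivity constant.

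It then remains to feed this weak greedy into the comparison between the greedy error and the Kolmogorov width of \cite{binev2011convergence,devore2013greedy} used in the preceding theorem: up to factors polynomial in $N$ one has $\sigma_N\le C\,(1+\gamma_w^{-1})^N d_N(\mathcal{M})=C\,(1+\sqrt{\gamma/\alpha})^N d_N(\mathcal{M})$. Inserting the hypothesis $d_N(\mathcal{M})\le ce^{-aN}$ gives $\sigma_N\le c\,(1+\sqrt{\gamma/\alpha})^N e^{-aN}=c\,e^{-\beta N}$ with $\beta=a-\log(1+\sqrt{\gamma/\alpha})$, which is strictly positive exactly under the stated assumption $a>\log(1+\sqrt{\gamma/\alpha})$; any polynomial prefactor is absorbed into a slightly smaller $\beta$. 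As a consistency check, when $\gamma=\alpha$ the Galerkin projection coincides with the orthogonal projection, $\gamma_w=1$, and the threshold degenerates to $a>\log 2$, recovering the preceding theorem.

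The main obstacle is the comparison inequality $\sigma_N\le C(1+\sqrt{\gamma/\alpha})^N d_N(\mathcal{M})$ itself: this is the genuinely technical step, proved by orthonormalizing the greedily chosen residuals, expanding the snapshots in that basis, and running a pigeonhole/dimension-counting argument against a near-optimal $N$-dimensional subspace realizing $d_N$ — and it is precisely this combinatorial estimate that forces the logarithmic threshold on $a$. Beyond it, two points require care rather than new ideas: the continuous-versus-discrete gap, since the $\operatorname{argmax}$ is actually taken over a finite training set $\mathbf{P}_h$ and over $\mathcal{M}_\delta$, so one must assume $\mathbf{P}_h$ is rich enough that $d_N(\mathcal{M}_\delta)$ and $d_N(\mathcal{M})$ are comparable and the supremum over $\mathbf{P}$ is well approximated; and the effectivity of the \emph{a posteriori} estimator $\eta$, which must be uniformly equivalent to the true error so as not to spoil the weakness constant $\gamma_w$.
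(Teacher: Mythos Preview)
The paper does not provide its own proof of this theorem: both convergence results in the greedy subsection are stated without proof and attributed to Binev et al.\ (2011) and DeVore et al.\ (2013). There is therefore nothing in the paper to compare your argument against.

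That said, your sketch correctly reconstructs the standard proof from those references. The decisive move is exactly the one you make: via the symmetric Cea sandwich $\lVert u_\delta-P_N u_\delta\rVert_{\mathbf{V}}\le\lVert u_\delta-u_{rb}\rVert_{\mathbf{V}}\le\sqrt{\gamma/\alpha}\,\lVert u_\delta-P_N u_\delta\rVert_{\mathbf{V}}$, the Galerkin-based greedy is recognised as a \emph{weak} greedy for the $\mathbf{V}$-orthogonal-projection error on $\mathcal{M}_\delta$ with weakness parameter $\gamma_w=\sqrt{\alpha/\gamma}$, after which the abstract comparison between weak-greedy error and Kolmogorov width applies. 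Your identification of $\beta=a-\log(1+\sqrt{\gamma/\alpha})$ and the degeneration to the threshold $a>\log 2$ of the preceding theorem when $\gamma=\alpha$ are the right consistency checks, and your caveats about the training set $\mathbf{P}_h$ and the effectivity of $\eta$ are well placed. The only point on which one might press you is the precise form of the comparison inequality $\sigma_N\le C(1+\gamma_w^{-1})^N d_N$: the original Binev et al.\ estimate for weak greedy carries a factor closer to $(2\gamma_w^{-1})^N$, so reproducing the paper's stated threshold verbatim requires the sharper DeVore-type bounds you allude to with ``up to factors polynomial in $N$''; this affects constants only, not the exponential structure of the conclusion.
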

Therefore the reduced basis approximation $u_{rb}(\mu)$ goes exponentially to the truth
solution $u_{\delta}(\mu)$. Note that it is not sufficient in order to conclude that the reduced method works properly, in fact it is necessary to suppose that truth solution is a good approximation of the exact solution $u(\mu)$.

It is important to observe that, using a greedy approach, the snapshots for each parameter in $\mathbf{P}_h$, $ u_{\delta}(\mu_1),\dots, u_{\delta}(\mu_N) $, may be linearly dependent, by leading to a large condition number of
the associated solution matrix. Therefore, in order to overcome such an issue, it is recommended to orthonormalize $ u_{\delta}(\mu_1),\dots, u_{\delta}(\mu_N) $ in order to find the reduced basis $\xi_1,\dots,\xi_N$.


\begin{algorithm}
\caption{The greedy argorithm}\label{pod-alg}
\text{\textbf{Input:} $\mu_1$, tolerance}\\
\text{\textbf{Output:} $\mathbf{V}_{rb}$}
\begin{algorithmic}[1]
\State Compute $u_{\delta}(\mu_n)$
\State $\mathbf{V}_{rb}=span\{u_{\delta}(\mu_1),\dots,u_{\delta}(\mu_n) \}$
 \For{$\mu \in \mathbf{P}_h$}
 \State compute $u_{rb}(\mu)$
  \State evaluate $\eta(\mu)$
 \EndFor
\State $ \mu_{n+1}=\underset{\mu \in \mathbf{P}}{\operatorname{argmax}} \big[  \eta(\mu) \big],$

\If{$\eta(\mu_{n+1}) >$ tolerance}
    \State $n=n+1$

\Else{}
    \State break.
\EndIf

\end{algorithmic}
\label{alg:the_alg}
\end{algorithm}

\subsection{Notes on the affine decomposition}
The computation of the reduced solution $u_{rb}(\mu)$ for each new parameter $\mu \in \mathbf{P}$ is the main task of the online phase. The associated computational cost should be independent on the complexity of the truth problem and depend only on the reduced basis space $N$. 
Therefore, in order to ensure the efficiency of the reduced order framework, the affine decomposition strategy is considered. The following forms are introduced: 
\begin{equation}
    a_q:\mathbf{V}\times\mathbf{V}\mapsto\mathbf{R}, \qquad f_q:\mathbf{V}\mapsto\mathbf{R}, \qquad l_q:\mathbf{V}\mapsto\mathbf{R},
\end{equation}
which are independent on the parameter $\mu \in \mathbf{P}$ so that
\begin{equation}
    a(u,v;\mu)=\sum_{q=1}^{Q_a} \theta^q_a(\mu)a_q(w,v),
\end{equation}
\begin{equation}
    f(v;\mu) = \sum_{q=1}^{Q_f}\theta^q_f(\mu) f_q(v),
\end{equation}
\begin{equation}
    l(v;\mu)=\sum_{q=1}^{Q_l} \theta_l^q(\mu)l_q(v).
\end{equation}
The quantities $\theta^q_a$,$\theta^q_f$,$\theta^q_l$ are the scalar coefficients depending only on the parameter values $\mu \in \mathbf{P}$. \\ As result of this decomposition, if $A^q_{rb}$ is the matrix associated to $a_q(\cdot,\cdot)$ and $f^q_{rb}$ and $l^q_{rb}$ are the vectors associated to $f_q(\cdot)$ and $l_q(\cdot)$, respectively, they can be precomputed during the offline phase (because independent on the parameter $\mu$). Then, during the online phase, one obtains:
\begin{equation}
    A^{\mu}_{rb}=\sum_{q=1}^{Q_a}\theta^q_a(\mu)A^q_{rb}, \qquad   f^{\mu}_{rb}=\sum_{q=1}^{Q_f}\theta^q_f(\mu)f^q_{rb},\qquad   l^{\mu}_{rb}=\sum_{q=1}^{Q_l}\theta^q_l(\mu)l^q_{rb}.
\end{equation}
If the problem
does not allow an affine representation, it can be approximated using other techniques, as the empirical interpolation method (see, e.g.,  \citealp{hesthaven2016certified}).

\section{ERROR ANALYSIS}\label{error}
An accurate posterior error estimation is desirable
to have a robust and powerful reduced order model. Reliable approximation of the error is a significant aspect during both the offline and online phases. In fact, ROMs are problem dependent and can not be directly related to specific spatial
or temporal scales, so problem intuition is of limited value and may even be wrong. In addition, sometimes they are used for real-time predictions when no time is available for an offline check. \\ When the parameter space is very large, some regions of $\mathbf{P}$ can be unexplored due to the sampling strategies chosen to obtain $\mathbf{P}_h$. 
Consequently, only the error in $\mathbf{P}_h$ and not also in $\mathbf{P}$ can be bounded. However, it is possible to control the output error during the online phase with several posteriori error estimations, for every $\mu \in \mathbf{P}$, as we are going to show in this section. Some properties have to be satisfied: the error bounds has to be rigorous and valid for all $N$ and for every $\mu \in \mathbf{P}$; furthermore, the bounds must be reasonably sharp since overly conservative errors
will yield inefficient approximations; finally, the bounds have to be computable at low cost,
independent on $N_\delta$.

\subsection{Error representation}
Let us introduce the discrete version of the coercivity and the continuity constants ({Eqs.} \ref{coercivity} and \ref{continuity}): 
    \begin{equation}
        \alpha_{\delta} (\mu)=\inf_{v_{\delta} \in \mathbf{V}_{\delta}}\frac{a(v_{\delta},v_{\delta};\mu)}{\lVert v_{\delta} \rVert_{\mathbf{V}}^2}, \qquad 
        \gamma_{\delta}(\mu) = \sup_{v_{\delta} \in \mathbf{V}_{\delta}}\sup_{w_{\delta} \in \mathbf{V}_{\delta}}\frac{a(w_{\delta},v_{\delta};\mu)}{\lVert w_{\delta} \rVert_{\mathbf{V}} \lVert v_{\delta} \rVert_{\mathbf{V}}}.
        \label{coerc-discr}
    \end{equation} 
By considering that $\mathbf{V}_{\delta}\subset\mathbf{V}$, one obtains: 
\begin{equation}
    \alpha(\mu)\le\alpha_{\delta}(\mu), \qquad \gamma_{\delta}(\mu)\le\gamma(\mu).
\end{equation}
If the error is $e(\mu)=u_{\delta}(\mu)-u_{rb}(\mu)\in \mathbf{V}_{\delta}$ from the bilinearity of $a(\cdot,\cdot;\mu)$ it holds: 
\begin{equation}
    a(e(\mu),v_{\delta};\mu) = r(v_{\delta};\mu), \quad \forall v_{\delta} \in \mathbf{V}_{\delta},
    \label{error-eq}
\end{equation}
where the residual $r(\cdot,\mu)$ in the dual space $\mathbf{V}_{\delta}^*$ is:
\begin{equation}
    r(v_{\delta};\mu)=f(v_{\delta};\mu)-a(u_{rb}(\mu),v_{\delta};\mu),\quad \forall v_{\delta} \in \mathbf{V}_{\delta}.
\end{equation}
{Eq.} \ref{error-eq} can be written as:
\begin{equation}
    a(e(\mu),v_{\delta};\mu) = (\hat{r}_{\delta}(\mu),v_{\delta})_{\mathbf{V}}, \quad \forall v_{\delta} \in \mathbf{V}_{\delta},
    \label{error-eq1}
\end{equation}
where $\hat{r}_{\delta}(\mu) \in \mathbf{V}_{\delta}$ is the Riesz representation of $r(\cdot,\mu)\in \mathbf{V}_{\delta}^*$:
\begin{equation}
    (\hat{r}_{\delta}(\mu),v_{\delta})_{\mathbf{V}}=r(v_{\delta};\mu),\quad \forall v_{\delta} \in \mathbf{V}_{\delta}.
\end{equation}
Therefore it holds:
\begin{equation}
    \lVert \hat{r}_{\delta}(\mu) \rVert_{\mathbf{V}} = \lVert r(\cdot;\mu) \rVert_{\mathbf{V}_{\delta}^*} =  \sup_{v_{\delta}\in\mathbf{V}_{\delta}}\frac{r(v_{\delta};\mu)}{\lVert v_{\delta} \rVert_{\mathbf{V}}},\quad \forall v_{\delta} \in \mathbf{V}_{\delta}.
\end{equation}
Due to the hypothesis of compliant problem (see {Sec.} \ref{abstract-form}), it can be introduced the { Prop.} \ref{prop-err} about some error relations, useful for future proofs.
\begin{proposition}
\label{prop-err}
If the problem is compliant (see {Sec.} \ref{abstract-form}), then $\forall \mu \in \mathbf{P}$:$$
    s_{\delta}(\mu)-s_{rb}(\mu)=\lVert u_{\delta}(\mu)-u_{rb}(\mu) \rVert_{\mu}^2,\quad 
    \Longrightarrow \quad s_{\delta}(\mu) \ge s_{rb}(\mu).$$
\end{proposition}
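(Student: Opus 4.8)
The plan is to work directly with the energy norm induced by $a(\cdot,\cdot;\mu)$, namely $\lVert v \rVert_\mu^2 = a(v,v;\mu)$, and to expand the squared norm of the error $e(\mu) = u_\delta(\mu) - u_{rb}(\mu) \in \mathbf{V}_\delta$ by exploiting exactly the two compliance hypotheses: $f=l$ and the symmetry of $a$. First I would write
\begin{equation}
\lVert e(\mu) \rVert_\mu^2 = a\big(u_\delta(\mu)-u_{rb}(\mu),\, u_\delta(\mu)-u_{rb}(\mu);\mu\big) = a(u_\delta,u_\delta;\mu) - 2\,a(u_\delta,u_{rb};\mu) + a(u_{rb},u_{rb};\mu),
\end{equation}
where the cross terms have been merged using the symmetry of $a(\cdot,\cdot;\mu)$ (suppressing the $\mu$ arguments of $u_\delta,u_{rb}$ for brevity).

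Next I would identify each of the three terms by choosing suitable test functions. The crucial observation is that $\mathbf{V}_{rb} \subset \mathbf{V}_\delta$, so the truth equation \ref{varitional-problem-discr} may be tested with any element of $\mathbf{V}_{rb}$. Testing \ref{varitional-problem-discr} with $v_\delta = u_\delta(\mu)$ and using $f=l$ gives $a(u_\delta,u_\delta;\mu) = f(u_\delta;\mu) = s_\delta(\mu)$; testing \ref{varitional-problem-discr} with $v_\delta = u_{rb}(\mu)$ gives the cross term $a(u_\delta,u_{rb};\mu) = f(u_{rb};\mu) = s_{rb}(\mu)$; and testing the reduced equation \ref{varitional-problem-rb} with $v_{rb} = u_{rb}(\mu)$ gives $a(u_{rb},u_{rb};\mu) = f(u_{rb};\mu) = s_{rb}(\mu)$. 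Substituting these three identities yields
\begin{equation}
\lVert e(\mu) \rVert_\mu^2 = s_\delta(\mu) - 2\,s_{rb}(\mu) + s_{rb}(\mu) = s_\delta(\mu) - s_{rb}(\mu),
\end{equation}
which is precisely the claimed equality. The inequality $s_\delta(\mu) \ge s_{rb}(\mu)$ then follows immediately, since the energy norm is nonnegative by coercivity of $a(\cdot,\cdot;\mu)$, so the right-hand side cannot be negative.

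I do not expect a genuine obstacle here; the argument is short once the right test functions are chosen. The one point requiring care, which I would highlight explicitly, is that both compliance assumptions are indispensable and are used in distinct places: symmetry is what allows the two cross terms to be combined into $-2\,a(u_\delta,u_{rb};\mu)$, while the identity $f=l$ is what lets us rewrite the bilinear-form quantities $a(u_\delta,u_\delta;\mu)$ and $a(u_{rb},u_{rb};\mu)$ as the outputs $s_\delta(\mu)$ and $s_{rb}(\mu)$. A fully equivalent route would be to invoke Galerkin orthogonality \ref{galerkin-ort} restricted to $\mathbf{V}_{rb}$, writing $\lVert e \rVert_\mu^2 = a(e,u_\delta;\mu)$ after discarding $a(e,u_{rb};\mu)=0$, and then identifying terms as above; I would mention this as an alternative but keep the direct expansion as the main line since it is the most elementary.
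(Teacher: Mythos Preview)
Your proof is correct and uses the same ingredients as the paper's (symmetry of $a$, the identity $f=l$, and the Galerkin equations at both levels). The paper in fact follows exactly the alternative route you outline at the end: it writes $s_\delta(\mu)-s_{rb}(\mu)=f(e(\mu);\mu)=a(u_\delta(\mu),e(\mu);\mu)=a(e(\mu),e(\mu);\mu)=\lVert e(\mu)\rVert_\mu^2$, invoking Galerkin orthogonality on $\mathbf{V}_{rb}$ rather than expanding the quadratic.
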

\begin{proof}
Let us consider $u_{\delta}(\mu) \in \mathbf{V}_{\delta}$ and $u_{rb}(\mu)\in \mathbf{V}_{rb}$ and recall the Galerkin orthogonality:
\begin{equation}
        a(u_{\delta}(\mu)-u_{rb}(\mu),v_{\delta};\mu) = 0, \quad \forall v_{rb} \in \mathbf{V}_{rb}.
\end{equation}
For the linearity of $f(\cdot;\mu)$, the hypothesis of compliant problem and the Galerkin orthogonality, we get:
\begin{equation}
    s_{\delta}(\mu)-s_{rb}(\mu)=f(e(\mu);\mu)=a(u_{\delta}(\mu),e(\mu);\mu)=a(e(\mu),e(\mu);\mu)=\lVert e(\mu) \rVert_{\mu}^2.
\end{equation}
\end{proof}

\subsection{Error bounds}
Let us assume to have a lower bound $\alpha_{LB}(\mu)$ for the discrete coercivity constant $\alpha_{\delta}(\mu)$ ({Eq.} \ref{coerc-discr}), indipendent on $N_{\delta}$. \\ Let us introduce the following error estimators:
\begin{equation}
    \eta_{en}(\mu)=\frac{\lVert\hat{r}_{\delta}(\mu)\rVert_{\mathbf{V}}}{\alpha_{LB}^{1/2}(\mu)}, \quad  \text{for the energy norm},
\end{equation}

\begin{equation}
    \eta_{s}(\mu)=\frac{\lVert\hat{r}_{\delta}(\mu)\rVert_{\mathbf{V}}^2}{\alpha_{LB}(\mu)}= \eta_{en}^2(\mu), \quad  \text{for the output},
\end{equation}

\begin{equation}
    \eta_{s,rel}(\mu)=\frac{\lVert\hat{r}_{\delta}(\mu)\rVert_{\mathbf{V}}^2}{\alpha_{LB}(\mu)s_{rb}(\mu)}= \frac{\eta_{s}(\mu)}{s_{rb}(\mu)}, \quad  \text{for the relative  output}.
\end{equation}
It holds the {Prop.} \ref{err-estimators} for the error of the reduced solution.
\begin{proposition}
\label{err-estimators}
For all $\mu \in \mathbf{P}$:
\begin{equation}
    \lVert u_{\delta}(\mu)-u_{rb}(\mu) \rVert_{\mu}\le \eta_{en}(\mu),
    \label{prima}
\end{equation}
\begin{equation}
     s_{\delta}(\mu)-s_{rb}(\mu) \le \eta_{s}(\mu),
     \label{seconda}
\end{equation}
\begin{equation}
     \frac{s_{\delta}(\mu)-s_{rb}(\mu)}{s_{\delta}(\mu)} \le \eta_{s,rel}(\mu).
     \label{terza}
\end{equation}
\end{proposition}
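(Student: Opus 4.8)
The plan is to derive all three estimates from a single residual--energy identity, letting the second and third bounds cascade from the first. The starting point is the error equation \eqref{error-eq1}, which holds for every test function in $\mathbf{V}_{\delta}$; the natural choice is to test it against the error itself, $e(\mu)=u_{\delta}(\mu)-u_{rb}(\mu)$, since $e(\mu)\in\mathbf{V}_{\delta}$. This turns the left-hand side into the squared energy norm and the right-hand side into an inner product that can be controlled by Cauchy--Schwarz together with the Riesz-representation identity. The remaining work is to convert the resulting $\lVert\cdot\rVert_{\mathbf{V}}$ factor into the energy norm using discrete coercivity, and then to invoke Proposition~\ref{prop-err} for the two output bounds.

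Concretely, for \eqref{prima} I would set $v_{\delta}=e(\mu)$ in \eqref{error-eq1} to obtain $\lVert e(\mu)\rVert_{\mu}^{2}=a(e(\mu),e(\mu);\mu)=(\hat{r}_{\delta}(\mu),e(\mu))_{\mathbf{V}}$, and then apply Cauchy--Schwarz in the $\mathbf{V}$-inner product to get $(\hat{r}_{\delta}(\mu),e(\mu))_{\mathbf{V}}\le\lVert\hat{r}_{\delta}(\mu)\rVert_{\mathbf{V}}\lVert e(\mu)\rVert_{\mathbf{V}}$. The next step converts the $\mathbf{V}$-norm of the error into its energy norm: since $e(\mu)\in\mathbf{V}_{\delta}$, the discrete coercivity bound \eqref{coerc-discr} gives $\alpha_{\delta}(\mu)\lVert e(\mu)\rVert_{\mathbf{V}}^{2}\le a(e(\mu),e(\mu);\mu)=\lVert e(\mu)\rVert_{\mu}^{2}$, and using $\alpha_{LB}(\mu)\le\alpha_{\delta}(\mu)$ yields $\lVert e(\mu)\rVert_{\mathbf{V}}\le\alpha_{LB}(\mu)^{-1/2}\lVert e(\mu)\rVert_{\mu}$. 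Substituting this back and cancelling one factor of $\lVert e(\mu)\rVert_{\mu}$ (the case $e(\mu)=0$ being trivial) produces $\lVert e(\mu)\rVert_{\mu}\le\lVert\hat{r}_{\delta}(\mu)\rVert_{\mathbf{V}}/\alpha_{LB}(\mu)^{1/2}=\eta_{en}(\mu)$.

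The two output estimates then follow quickly. For \eqref{seconda}, Proposition~\ref{prop-err} identifies $s_{\delta}(\mu)-s_{rb}(\mu)$ with $\lVert e(\mu)\rVert_{\mu}^{2}$, so squaring \eqref{prima} gives $s_{\delta}(\mu)-s_{rb}(\mu)\le\eta_{en}(\mu)^{2}=\eta_{s}(\mu)$. For the relative bound \eqref{terza}, I would combine \eqref{seconda} with the ordering $s_{\delta}(\mu)\ge s_{rb}(\mu)$ (again from Proposition~\ref{prop-err}): dividing \eqref{seconda} by $s_{\delta}(\mu)$ and then replacing $s_{\delta}(\mu)$ in the denominator by the smaller quantity $s_{rb}(\mu)$ only enlarges the right-hand side, giving $(s_{\delta}(\mu)-s_{rb}(\mu))/s_{\delta}(\mu)\le\eta_{s}(\mu)/s_{rb}(\mu)=\eta_{s,rel}(\mu)$.

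I expect the only genuinely delicate point to be the careful bookkeeping of the two coercivity constants: the energy-to-$\mathbf{V}$-norm conversion is controlled by the \emph{discrete} constant $\alpha_{\delta}(\mu)$, whereas the computable estimator uses the lower bound $\alpha_{LB}(\mu)$, and one must insert the inequality $\alpha_{LB}(\mu)\le\alpha_{\delta}(\mu)$ in the correct direction so that the result remains a valid upper bound. A secondary subtlety is that the relative estimate implicitly requires $s_{rb}(\mu)>0$ for the final division and the sign of the inequality to be legitimate; this is consistent with the compliant setting, where $s_{rb}(\mu)={(u^{\mu}_{rb})}^{T}f^{\mu}_{rb}$ is nonnegative.
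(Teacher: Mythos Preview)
Your argument is correct and follows essentially the same route as the paper: test the error equation \eqref{error-eq1} with $v_{\delta}=e(\mu)$, apply Cauchy--Schwarz, convert $\lVert e(\mu)\rVert_{\mathbf{V}}$ to $\lVert e(\mu)\rVert_{\mu}$ via discrete coercivity and $\alpha_{LB}(\mu)\le\alpha_{\delta}(\mu)$, and then invoke Proposition~\ref{prop-err} for the output bounds. Your write-up is in fact more explicit than the paper's, which simply states that combining the two displayed inequalities with Proposition~\ref{prop-err} yields all three estimates; your added remarks on the trivial case $e(\mu)=0$ and on the positivity of $s_{rb}(\mu)$ needed for \eqref{terza} are appropriate refinements.
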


\begin{proof}
Using {Eq.} \ref{error-eq1} with $v_{\delta} = e(\mu)$ and the Cauchy-Schwarz inequality, it holds:
\begin{equation}
    \lVert e(\mu) \rVert_{\mu}^2 = a(e(\mu),e(\mu);\mu) \le \lVert \hat{r}_{\delta}(\mu) \rVert_{\mathbf{V}} \lVert e(\mu) \rVert_{\mathbf{V}}. \label{x}
\end{equation}
In addition, for the coercivity of $a(\cdot,\cdot;\mu)$ and by considering that $\alpha_{LB}(\mu)\le\alpha_{\delta}(\mu)$ by definition, one obtains:
\begin{equation}
    \alpha_{LB}(\mu)\lVert e(\mu) \rVert_{\mathbf{V}}^2 \le a(e(\mu),e(\mu);\mu) =\lVert e(\mu) \rVert_{\mu}^2. \label{xx}
\end{equation}
Combining thus {Eqs.} \ref{x} and \ref{xx} with  { Prop.} \ref{prop-err} yields 
{ Eqs.} \ref{prima}, \ref{seconda} and \ref{terza}. 
\end{proof}
Let us define the following effectivity indexes:
\begin{equation}
    eff_{en}(\mu) = \frac{\eta_{en}(\mu)}{\lVert u_{\delta}(\mu)-u_{rb}(\mu) \rVert_{\mu}},
\end{equation}
\begin{equation}
    eff_{s}(\mu) = \frac{\eta_{s}(\mu)}{ s_{\delta}(\mu)-s_{rb}(\mu) },
\end{equation}
\begin{equation}
    eff_{s,rel}(\mu) = \frac{\eta_{s,rel}(\mu)s(\mu)}{ s_{\delta}(\mu)-s_{rb}(\mu)}.
\end{equation}
These quantities are $\geq 1$ as ensured by { Prop.} \ref{err-estimators}, and the quality of the estimators increases as the effectivity indexes are closer to one. If the problem is coercive and compliant, they are bounded as specified in the next proposition.
\begin{proposition}
For all $\mu \in \mathbf{P}$, the effectivity indexes are controlled by an upper bound as follows: 
\begin{equation}
    eff_{en}(\mu)\le \sqrt{\gamma_{\delta}(\mu)/\alpha_{LB}(\mu)},
    \label{eff_en}
\end{equation}
\begin{equation}
    eff_{s}(\mu)\le \gamma_{\delta}(\mu)/\alpha_{LB}(\mu),
    \label{eff_s}
\end{equation}
\begin{equation}
    eff_{s,rel}(\mu)\le (1+\eta_{s,rel}) \gamma_{\delta}(\mu)/\alpha_{LB}(\mu).
    \label{eff_s_rel}
\end{equation}
\end{proposition}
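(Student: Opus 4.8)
The plan is to establish the energy-norm estimate \eqref{eff_en} first, because the output bounds \eqref{eff_s} and \eqref{eff_s_rel} then follow from it by purely algebraic manipulations resting on Propositions \ref{prop-err} and \ref{err-estimators}. Throughout I write $e(\mu)=u_\delta(\mu)-u_{rb}(\mu)$ and recall from \eqref{error-eq1} that the Riesz representative obeys $a(e(\mu),v_\delta;\mu)=(\hat r_\delta(\mu),v_\delta)_{\mathbf V}$ for every $v_\delta\in\mathbf V_\delta$.

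For \eqref{eff_en} the only delicate point is to bound $\lVert\hat r_\delta(\mu)\rVert_{\mathbf V}$ from above by $\lVert e(\mu)\rVert_\mu$ while paying only a factor $\sqrt{\gamma_\delta(\mu)}$ instead of the full $\gamma_\delta(\mu)$. First I would test \eqref{error-eq1} with $v_\delta=\hat r_\delta(\mu)$ to obtain $\lVert\hat r_\delta(\mu)\rVert_{\mathbf V}^2=a(e(\mu),\hat r_\delta(\mu);\mu)$. The crucial step is then to apply the Cauchy--Schwarz inequality in the energy inner product $a(\cdot,\cdot;\mu)$ --- which is a genuine inner product precisely because the compliant hypothesis forces $a$ to be symmetric and coercive --- rather than invoking continuity directly in $\lVert\cdot\rVert_{\mathbf V}$:
\begin{equation}
\lVert\hat r_\delta(\mu)\rVert_{\mathbf V}^2 = a(e(\mu),\hat r_\delta(\mu);\mu) \le \lVert e(\mu)\rVert_\mu\,\lVert\hat r_\delta(\mu)\rVert_\mu .
\end{equation}
Next I would convert $\lVert\hat r_\delta(\mu)\rVert_\mu$ back to the $\mathbf V$-norm through continuity, $\lVert\hat r_\delta(\mu)\rVert_\mu^2=a(\hat r_\delta(\mu),\hat r_\delta(\mu);\mu)\le\gamma_\delta(\mu)\lVert\hat r_\delta(\mu)\rVert_{\mathbf V}^2$, and cancel one power of $\lVert\hat r_\delta(\mu)\rVert_{\mathbf V}$ to reach $\lVert\hat r_\delta(\mu)\rVert_{\mathbf V}\le\sqrt{\gamma_\delta(\mu)}\,\lVert e(\mu)\rVert_\mu$. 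Dividing by $\alpha_{LB}^{1/2}(\mu)\lVert e(\mu)\rVert_\mu$ and recalling the definitions of $\eta_{en}$ and $eff_{en}$ yields \eqref{eff_en}. I expect this Cauchy--Schwarz-in-the-energy-product argument to be the main obstacle: a naive bound using continuity in $\lVert\cdot\rVert_{\mathbf V}$ together with coercivity would only deliver $\gamma_\delta/\alpha_{LB}$ and miss the square root.

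The output estimate \eqref{eff_s} is then immediate. By Proposition \ref{prop-err} the denominator of $eff_s$ equals $s_\delta(\mu)-s_{rb}(\mu)=\lVert e(\mu)\rVert_\mu^2$, while $\eta_s(\mu)=\eta_{en}^2(\mu)$ by construction, so $eff_s(\mu)=eff_{en}^2(\mu)\le\gamma_\delta(\mu)/\alpha_{LB}(\mu)$.

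For the relative output \eqref{eff_s_rel} I would use $\eta_{s,rel}=\eta_s/s_{rb}$ to factor $eff_{s,rel}(\mu)=eff_s(\mu)\,s_\delta(\mu)/s_{rb}(\mu)$, reading the output in the numerator of $eff_{s,rel}$ as $s_\delta(\mu)$ in accordance with the relative error controlled in \eqref{terza}. It then remains to bound the ratio $s_\delta/s_{rb}$: writing $s_\delta/s_{rb}=1+(s_\delta-s_{rb})/s_{rb}$ and using the output bound \eqref{seconda}, namely $s_\delta(\mu)-s_{rb}(\mu)\le\eta_s(\mu)$, gives $s_\delta(\mu)/s_{rb}(\mu)\le 1+\eta_s(\mu)/s_{rb}(\mu)=1+\eta_{s,rel}(\mu)$. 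Combining this with \eqref{eff_s} produces \eqref{eff_s_rel}. Apart from the energy-norm argument, every remaining step is routine bookkeeping.
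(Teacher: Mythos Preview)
Your proof is correct and follows essentially the same route as the paper: both test \eqref{error-eq1} with $v_\delta=\hat r_\delta(\mu)$, apply Cauchy--Schwarz in the energy inner product, use continuity to pass from $\lVert\hat r_\delta(\mu)\rVert_\mu$ back to $\lVert\hat r_\delta(\mu)\rVert_{\mathbf V}$, and then deduce \eqref{eff_s} and \eqref{eff_s_rel} from Proposition~\ref{prop-err} and \eqref{seconda} exactly as you describe. Your presentation is in fact slightly cleaner in the last step, where you correctly write $(s_\delta-s_{rb})/s_{rb}\le\eta_s/s_{rb}=\eta_{s,rel}$; the paper's displayed chain has a typographical slip ($s_\delta$ in place of $s_{rb}$ in the denominator) that you avoid.
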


\begin{proof}
Using {Eq.} \ref{error-eq1} with $v_{\delta} = \hat{r}_{\delta}(\mu)$ and the Cauchy-Schwarz inequality, it holds:
\begin{equation}
    \lVert \hat{r}_{\delta}(\mu) \rVert_{\mathbf{V}}^2 = a(e(\mu),\hat{r}_{\delta}(\mu);\mu) \le \lVert \hat{r}_{\delta}(\mu) \rVert_{\mu} \lVert e(\mu) \rVert_{\mu}.
    \label{r_hat_v}
\end{equation}
In addition, the continuity of the bilinear form $a(\cdot,\cdot;\mu)$ yields:
\begin{equation}
     \lVert \hat{r}_{\delta}(\mu) \rVert_{\mu}^2 =  a(\hat{r}_{\delta}(\mu) ,\hat{r}_{\delta}(\mu) ;\mu)\le \gamma_{\delta}(\mu)\lVert \hat{r}_{\delta}(\mu) \rVert_{\mathbf{V}}^2\le \gamma_{\delta}(\mu)\lVert \hat{r}_{\delta}(\mu) \rVert_{\mu}\lVert e(\mu)\rVert_{\mu}
     \label{r_hat_mu}
\end{equation}
Combining {Eqs.} \ref{r_hat_v} and \ref{r_hat_mu} we obtain: 
\begin{equation}
    \eta_{en}^2(\mu) = \frac{\lVert \hat{r}_{\delta}(\mu) \rVert_{\mathbf{V}}^2}{\alpha_{LB}(\mu)} \le \frac{\lVert \hat{r}_{\delta}(\mu) \rVert_{\mu}\lVert e(\mu) \rVert_{\mu}}{\alpha_{LB}(\mu)}\le \frac{\gamma_{\delta}(\mu)}{\alpha_{LB}(\mu)} \lVert e(\mu) \rVert_{\mu}^2,
\end{equation}
from which it follows {Eq.} \ref{eff_en}. \\ Using {Prop.} \ref{prop-err} yields:
\begin{equation}
    eff_s(\mu) = eff_{en}^2(\mu), 
\end{equation}
which proves {Eq.} \ref{eff_s}. \\ Finally, by definition of $\eta_{s,rel}$, it holds:
\begin{equation}
    eff_{s,rel}(\mu)=\frac{s_{\delta}(\mu)}{s_{rb}(\mu)}eff_s(\mu).
\end{equation}
Then, from {Prop.} \ref{prop-err} and {Eq.} \ref{seconda}, one obtains:
\begin{equation}
    \frac{s_{\delta}(\mu)}{s_{rb}(\mu)}=1+\frac{s_{\delta}(\mu)-s_{rb}(\mu)}{s_{rb}(\mu)} \le 1+ \frac{\eta_s(\mu)}{s_{\delta}(\mu)}=1+\eta_{s,rel}(\mu),
\end{equation}
which proves {Eq.} \ref{eff_s_rel}.
\end{proof}
So far, the error of the reduced solution $u_{\delta}(\mu)-u_{rb}(\mu)$ is controlled with the parameter dependent norm $\lVert \cdot \rVert_{\mu}$. 
However, it also proves useful to find an error estimation in a norm which is independent on $\mu$. So, for the fixed parameter $\bar{\mu}$, we refer to the norm over $\mathbf{V}$, $\lVert \cdot \rVert_{\mathbf{V}}$. \\ Let us define the error estimators:
\begin{equation}
    \eta_{\mathbf{V}}(\mu)=\frac{\lVert \hat{r}_{\delta}(\mu) \rVert_{\mathbf{V}}}{\alpha_{LB}(\mu)},
\end{equation}
\begin{equation}
        \eta_{\mathbf{V},rel}(\mu)=\frac{2 \lVert  \hat{r}_{\delta}(\mu) \rVert_{\mathbf{V}}}{\alpha_{LB}(\mu)\lVert u_{rb}(\mu) \rVert_{\mathbf{V}}}.
\end{equation}
\begin{proposition}
For all $\mu \in \mathbf{P}$
\begin{equation}
    \lVert u_{\delta}(\mu)-u_{rb}(\mu) \rVert_{\mathbf{V}} \le \eta_{\mathbf{V}}(\mu).
    \label{u_delta-u_rb}
\end{equation}
Moreover, if $\eta_{\mathbf{V},rel}(\mu) \le 1$:
\begin{equation}
    \frac{ \lVert u_{\delta}(\mu)-u_{rb}(\mu) \rVert_{\mathbf{V}}}{\lVert u_{\delta}(\mu)\rVert_{\mathbf{V}}}\le \eta_{\mathbf{V},rel}(\mu),
\end{equation}
for some $\mu \in \mathbf{P}$.
\end{proposition}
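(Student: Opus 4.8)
The plan is to treat the absolute bound first and then bootstrap it into the relative bound under the stated smallness hypothesis. Throughout I write $e(\mu) = u_{\delta}(\mu) - u_{rb}(\mu)$, exactly as in the earlier error analysis.

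For the inequality \ref{u_delta-u_rb}, I would start from the coercivity estimate already recorded in {Eq.} \ref{xx}, namely $\alpha_{LB}(\mu)\lVert e(\mu) \rVert_{\mathbf{V}}^2 \le a(e(\mu),e(\mu);\mu)$. Next I would invoke the error identity {Eq.} \ref{error-eq1} with test function $v_{\delta}=e(\mu)$, which gives $a(e(\mu),e(\mu);\mu) = (\hat{r}_{\delta}(\mu),e(\mu))_{\mathbf{V}}$, and bound the right-hand side by Cauchy--Schwarz as $\lVert \hat{r}_{\delta}(\mu) \rVert_{\mathbf{V}} \lVert e(\mu) \rVert_{\mathbf{V}}$. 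Chaining these and dividing through by $\lVert e(\mu) \rVert_{\mathbf{V}}$ (the case $e(\mu)=0$ being trivial) yields $\lVert e(\mu) \rVert_{\mathbf{V}} \le \lVert \hat{r}_{\delta}(\mu) \rVert_{\mathbf{V}}/\alpha_{LB}(\mu) = \eta_{\mathbf{V}}(\mu)$. This is the same bound-via-residual argument used in {Prop.} \ref{err-estimators}, only now carried out in the $\mathbf{V}$-norm rather than the energy norm.

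For the relative bound, the key observation is that unwinding the definition of $\eta_{\mathbf{V},rel}$ shows it equals $2\eta_{\mathbf{V}}(\mu)/\lVert u_{rb}(\mu) \rVert_{\mathbf{V}}$, so the hypothesis $\eta_{\mathbf{V},rel}(\mu) \le 1$ is precisely the statement $2\eta_{\mathbf{V}}(\mu) \le \lVert u_{rb}(\mu) \rVert_{\mathbf{V}}$. Combined with the absolute bound $\lVert e(\mu) \rVert_{\mathbf{V}} \le \eta_{\mathbf{V}}(\mu)$ just proven, this forces $\lVert e(\mu) \rVert_{\mathbf{V}} \le \tfrac{1}{2}\lVert u_{rb}(\mu) \rVert_{\mathbf{V}}$. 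I would then apply the reverse triangle inequality to the splitting $u_{\delta}(\mu) = u_{rb}(\mu) + e(\mu)$ to obtain $\lVert u_{\delta}(\mu) \rVert_{\mathbf{V}} \ge \lVert u_{rb}(\mu) \rVert_{\mathbf{V}} - \lVert e(\mu) \rVert_{\mathbf{V}} \ge \tfrac{1}{2}\lVert u_{rb}(\mu) \rVert_{\mathbf{V}}$, hence $1/\lVert u_{\delta}(\mu) \rVert_{\mathbf{V}} \le 2/\lVert u_{rb}(\mu) \rVert_{\mathbf{V}}$. Multiplying the absolute bound by this last inequality produces $\lVert e(\mu) \rVert_{\mathbf{V}}/\lVert u_{\delta}(\mu) \rVert_{\mathbf{V}} \le 2\eta_{\mathbf{V}}(\mu)/\lVert u_{rb}(\mu) \rVert_{\mathbf{V}} = \eta_{\mathbf{V},rel}(\mu)$, as claimed.

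The main obstacle is the bookkeeping in the relative bound, specifically that the two denominators differ: the quantity being estimated carries the \emph{truth} norm $\lVert u_{\delta}(\mu) \rVert_{\mathbf{V}}$, whereas the computable estimator can only use the \emph{reduced} norm $\lVert u_{rb}(\mu) \rVert_{\mathbf{V}}$, since $u_{\delta}(\mu)$ is unavailable online. The factor of $2$ baked into the definition of $\eta_{\mathbf{V},rel}$ together with the smallness hypothesis are exactly calibrated to convert between these denominators through the reverse triangle inequality; the crux is recognizing that $\eta_{\mathbf{V},rel}(\mu)\le 1$ is what bounds the error by half the reduced-solution norm and thereby controls the ratio of the two norms.
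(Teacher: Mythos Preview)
Your proof is correct and follows essentially the same route as the paper's own argument: the absolute bound comes from coercivity plus Cauchy--Schwarz applied to the error identity {Eq.}~\ref{error-eq1}, and the relative bound comes from the reverse triangle inequality together with the observation that $\eta_{\mathbf{V},rel}(\mu)\le 1$ forces $\lVert u_{\delta}(\mu)\rVert_{\mathbf{V}}\ge\tfrac{1}{2}\lVert u_{rb}(\mu)\rVert_{\mathbf{V}}$. Your write-up is in fact slightly cleaner than the paper's in that you multiply the absolute bound directly by the denominator comparison, whereas the paper reaches the same conclusion by rewriting $\eta_{\mathbf{V},rel}(\mu)$ and chaining inequalities from the estimator side, but the substance is identical.
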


\begin{proof}
The first inequality can be proven similarly to what done for {Eq.} \ref{prima}. \\
For the second one, due to the hypothesis $\eta_{\mathbf{V},rel}(\mu) \le 1$, it holds:
\begin{equation}
\begin{split}
    \lVert u_{\delta}(\mu) \rVert_{\mathbf{V}} & = \lVert u_{rb}(\mu) \rVert_{\mathbf{V}}+\lVert u_{\delta}(\mu) \rVert_{\mathbf{V}} -\lVert u_{rb}(\mu) \rVert_{\mathbf{V}}  \ge \lVert u_{rb}(\mu) \rVert_{\mathbf{V}} - \lVert u_{\delta}(\mu) - u_{rb}(\mu) \rVert_{\mathbf{V}}
    \\
    & \ge \lVert u_{rb}(\mu) \rVert_{\mathbf{V}} -\eta_{\mathbf{V}}(\mu) = \big(1-\frac{1}{2}\eta_{\mathbf{V},rel}(\mu)\big)\lVert u_{rb}(\mu) \rVert_{\mathbf{V}} \ge \frac{1}{2}\lVert u_{rb}(\mu) \rVert_{\mathbf{V}}.
\end{split}
\end{equation}
Finally, it yields:
\begin{equation*}
    \eta_{\mathbf{V},rel}(\mu)=2\frac{ \lVert  \hat{r}_{\delta}(\mu) \rVert_{\mathbf{V}}}{\alpha_{LB}(\mu)\lVert u_{rb}(\mu) \rVert_{\mathbf{V}}}=2\frac{\lVert u_{\delta}(\mu) \rVert_{\mathbf{V}}}{\lVert u_{rb}(\mu) \rVert_{\mathbf{V}}} 
    \frac{\eta_{\mathbf{V}}(\mu)}{\lVert u_{\delta}(\mu) \rVert_{\mathbf{V}}} \ge \frac{\eta_{\mathbf{V}}(\mu)}{\lVert u_{\delta}(\mu) \rVert_{\mathbf{V}}} \ge \frac{\lVert u_{\delta}(\mu)-u_{rb}(\mu) \rVert_{\mathbf{V}}}{\lVert u_{\delta}(\mu) \rVert_{\mathbf{V}}},
\end{equation*}
which concludes the proof.
\end{proof}
The associated effectivities indexes can be defined as:
\begin{equation}
    eff_{\mathbf{V}}(\mu) = \frac{\eta_{\mathbf{V}}(\mu)}{\lVert u_{\delta}(\mu)-u_{rb}(\mu) \rVert_{\mathbf{V}}}
\end{equation}
\begin{equation}
        eff_{\mathbf{V},rel}(\mu) = \frac{\eta_{\mathbf{V},rel}(\mu)\lVert u_{\delta}(\mu) \rVert_{\mathbf{V}}}{\lVert u_{\delta}(\mu)-u_{rb}(\mu) \rVert_{\mathbf{V}}}
\end{equation}

\begin{proposition}
For all $\mu \in \mathbf{P}$
\begin{equation}
    eff_{\mathbf{V}}(\mu)\le \frac{\gamma_{\delta}(\mu)}{\alpha_{LB}(\mu)}.
\end{equation}
Moreover, if $\eta_{\mathbf{V},rel}(\mu)\le 1$:
\begin{equation}
        eff_{\mathbf{V},rel}(\mu)\le 3 \frac{\gamma_{\delta}(\mu)}{\alpha_{LB}(\mu)},
\end{equation}
for some $\mu \in \mathbf{P}$.
\end{proposition}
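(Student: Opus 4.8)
The plan is to reduce both bounds to the residual--error relation in Eq.~\ref{error-eq1}, exactly as was done for the energy-norm effectivity estimate, and then to handle the relative index by factoring it through the absolute one. For the first inequality, I would test Eq.~\ref{error-eq1} against $v_{\delta}=\hat{r}_{\delta}(\mu)$, which gives $\lVert \hat{r}_{\delta}(\mu) \rVert_{\mathbf{V}}^2 = a(e(\mu),\hat{r}_{\delta}(\mu);\mu)$. Applying the discrete continuity constant $\gamma_{\delta}(\mu)$ from Eq.~\ref{coerc-discr} to the right-hand side and cancelling one factor of $\lVert \hat{r}_{\delta}(\mu) \rVert_{\mathbf{V}}$ yields the key estimate $\lVert \hat{r}_{\delta}(\mu) \rVert_{\mathbf{V}} \le \gamma_{\delta}(\mu)\lVert e(\mu) \rVert_{\mathbf{V}}$. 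Dividing by $\alpha_{LB}(\mu)\lVert e(\mu) \rVert_{\mathbf{V}}$ and recalling the definition of $\eta_{\mathbf{V}}(\mu)$ then gives $eff_{\mathbf{V}}(\mu) \le \gamma_{\delta}(\mu)/\alpha_{LB}(\mu)$ at once.

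For the relative index, the natural step is to write it as a product of the absolute index and a norm ratio. Combining the definitions, the prefactor $2/(\alpha_{LB}(\mu)\lVert u_{rb}(\mu)\rVert_{\mathbf{V}})$ hidden in $\eta_{\mathbf{V},rel}(\mu)$ merges with the $\lVert u_{\delta}(\mu)\rVert_{\mathbf{V}}$ in the numerator of $eff_{\mathbf{V},rel}(\mu)$, so that $eff_{\mathbf{V},rel}(\mu) = 2\,\frac{\lVert u_{\delta}(\mu) \rVert_{\mathbf{V}}}{\lVert u_{rb}(\mu) \rVert_{\mathbf{V}}}\,eff_{\mathbf{V}}(\mu)$. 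It then remains to bound the ratio $\lVert u_{\delta}(\mu) \rVert_{\mathbf{V}}/\lVert u_{rb}(\mu) \rVert_{\mathbf{V}}$ from above by $3/2$. I would obtain this from the triangle inequality $\lVert u_{\delta}(\mu) \rVert_{\mathbf{V}} \le \lVert u_{rb}(\mu) \rVert_{\mathbf{V}} + \lVert e(\mu) \rVert_{\mathbf{V}}$, the absolute bound $\lVert e(\mu) \rVert_{\mathbf{V}} \le \eta_{\mathbf{V}}(\mu)$ of Eq.~\ref{u_delta-u_rb}, and the identity $\eta_{\mathbf{V}}(\mu) = \tfrac{1}{2}\eta_{\mathbf{V},rel}(\mu)\lVert u_{rb}(\mu) \rVert_{\mathbf{V}}$ obtained by comparing the two relative-estimator definitions. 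Under the hypothesis $\eta_{\mathbf{V},rel}(\mu)\le 1$ this gives $\lVert u_{\delta}(\mu) \rVert_{\mathbf{V}} \le \tfrac{3}{2}\lVert u_{rb}(\mu) \rVert_{\mathbf{V}}$, so the ratio is at most $3/2$ and hence $eff_{\mathbf{V},rel}(\mu) \le 3\,eff_{\mathbf{V}}(\mu) \le 3\gamma_{\delta}(\mu)/\alpha_{LB}(\mu)$.

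The main obstacle is the bookkeeping in the second part, and in particular getting the direction of the norm-ratio estimate right. The proof of the previous proposition produced a \emph{lower} bound on $\lVert u_{\delta}(\mu)\rVert_{\mathbf{V}}$ in terms of $\lVert u_{rb}(\mu)\rVert_{\mathbf{V}}$, whereas here I need the opposite, namely an upper bound on their ratio; this is why the triangle inequality must be applied in the form $\lVert u_{\delta}(\mu)\rVert_{\mathbf{V}} \le \lVert u_{rb}(\mu)\rVert_{\mathbf{V}} + \lVert e(\mu)\rVert_{\mathbf{V}}$ rather than the reverse direction used earlier. The only other delicate point is tracking the factor $2$ carried by $\eta_{\mathbf{V},rel}$, which combines with the $3/2$ ratio bound to produce exactly the constant $3$; everything else is a routine reuse of continuity and the Cauchy--Schwarz argument already employed for $eff_{en}$.
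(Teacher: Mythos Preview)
Your argument is correct and follows essentially the same route as the paper: for the relative index you write $eff_{\mathbf{V},rel}(\mu)=2\,\frac{\lVert u_{\delta}(\mu)\rVert_{\mathbf{V}}}{\lVert u_{rb}(\mu)\rVert_{\mathbf{V}}}\,eff_{\mathbf{V}}(\mu)$ and bound the norm ratio by $3/2$ via the triangle inequality together with $\lVert e(\mu)\rVert_{\mathbf{V}}\le \eta_{\mathbf{V}}(\mu)=\tfrac{1}{2}\eta_{\mathbf{V},rel}(\mu)\lVert u_{rb}(\mu)\rVert_{\mathbf{V}}$, exactly as in the text. The only cosmetic difference is in the first bound, where the paper recycles the already-proved estimate $eff_{en}(\mu)\le\sqrt{\gamma_{\delta}(\mu)/\alpha_{LB}(\mu)}$ and the continuity inequality $\lVert e(\mu)\rVert_{\mu}\le \gamma_{\delta}^{1/2}(\mu)\lVert e(\mu)\rVert_{\mathbf{V}}$, whereas you go one step more directly by reading off $\lVert \hat r_{\delta}(\mu)\rVert_{\mathbf{V}}\le \gamma_{\delta}(\mu)\lVert e(\mu)\rVert_{\mathbf{V}}$ from $a(e(\mu),\hat r_{\delta}(\mu);\mu)$; both paths are equivalent.
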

\begin{proof}
Using the inequality $\lVert e(\mu) \rVert_{\mu} \le \gamma_{\delta}(\mu)\lVert e(\mu) \rVert_{\mathbf{V}}$ and {Eq.} \ref{eff_en}, it holds the first statement:
\begin{equation}
    eff_{\mathbf{V}}(\mu) = \frac{eff_{en}(\mu)}{\alpha_{LB}(\mu)}\le\frac{\gamma_{\delta}^{1/2}(\mu)}{\alpha_{LB}(\mu)}\frac{\lVert e(\mu) \rVert_{\mu}}{\lVert e(\mu) \rVert_{\mathbf{V}}}\le \frac{\gamma_{\delta}(\mu)}{\alpha_{LB}(\mu)}.
\end{equation}
As for the second inequality, combining the hypothesis $\eta_{\mathbf{V},rel}(\mu)\le 1$ and {Eq.} \ref{u_delta-u_rb}, one obtains:
\begin{equation}
\begin{split}
    \lVert u_{\delta}(\mu) \rVert_{\mathbf{V}} - & \lVert u_{rb}(\mu) \rVert_{\mathbf{V}}\le \lVert u_{\delta}(\mu)-u_{rb}(\mu) \rVert_{\mathbf{V}}
    \\
    & \le \frac{\lVert \hat{r}_{\delta}(\mu) \rVert_{\mathbf{V}}}{\alpha_{LB}(\mu)} = \frac{1}{2} \lVert u_{rb}(\mu) \rVert_{\mathbf{V}} \eta_{\mathbf{V},rel}(\mu)\le \frac{1}{2} \lVert u_{rb}(\mu) \rVert_{\mathbf{V}} .
\end{split}
\end{equation}
Finally, we get
\begin{equation*}
    eff_{\mathbf{V},rel}(\mu) = 2 \frac{\lVert u_{\delta}(\mu) \rVert_{\mathbf{V}}}{\lVert u_{rb}(\mu) \rVert_{\mathbf{V}}} eff_{\mathbf{V}}(\mu) = 2 \Bigg(1+ \frac{\lVert u_{\delta}(\mu) \rVert_{\mathbf{V}}-\lVert u_{rb}(\mu) \rVert_{\mathbf{V}}}{\lVert u_{rb}(\mu) \rVert_{\mathbf{V}}} \Bigg)eff_{\mathbf{V}}(\mu) \le 3 \frac{\gamma_{\delta}(\mu)}{\alpha_{LB}(\mu)}.
\end{equation*}

\end{proof}

\section{CONCLUSIONS}\label{concl}
The aim of this chapter is to provide to the reader a brief overview about the ROM framework. 
However, the main ingredients here presented can be useful to study many applications and sometimes they can be easily extended to more complex cases, as reported in others chapters of the book. 

In this context, we considered an affine linear coercive formulation in a finite element environment for a simple explanation. We investigated the POD and the greedy algorithm in order to understand how to extract the reduced space. On the other hand, the Galerkin projection is applied to compute the coefficients of the reduced solution. Moreover, we
gave some insights related to the posterior error estimation. 

To conclude, it is clear that the \emph{offline}-\emph{online} paradigm enables to improve the computational speed-up in many scenarios because the dimension of the reduced space is typically much smaller than the full order one. 

\section*{ACKNOWLEDGMENTS}
We acknowledge the support provided by the European Research Council Executive Agency by the Consolidator Grant project AROMA-CFD "Advanced Reduced Order Methods with Applications in Computational Fluid Dynamics" - GA 681447, H2020-ERC CoG 2015 AROMA-CFD, PI G. Rozza, and INdAM-GNCS 2019-2020 projects. 
\bibliographystyle{elsarticle-harv}  
\bibliography{references.bib}



\end{document}